\documentclass[11pt]{amsart} 
\usepackage{verbatim, latexsym, amssymb, amsmath,color}
\usepackage{epsfig}
\usepackage{hyperref}
\usepackage{float}
\usepackage{graphicx}
\usepackage{sidecap}

\def\Xint#1{\mathchoice
   {\XXint\displaystyle\textstyle{#1}}%
   {\XXint\textstyle\scriptstyle{#1}}%
   {\XXint\scriptstyle\scriptscriptstyle{#1}}%
   {\XXint\scriptscriptstyle\scriptscriptstyle{#1}}%
   \!\int}
\def\XXint#1#2#3{{\setbox0=\hbox{$#1{#2#3}{\int}$}
     \vcenter{\hbox{$#2#3$}}\kern-.5\wd0}}

\def\dashint{\Xint-}

\newtheorem{thm}{Theorem}[section]

\newtheorem{lemm}[thm]{Lemma}
\newtheorem{cor}[thm]{Corollary}

\theoremstyle{remark}
\newtheorem{rmk}[thm]{Remark}
\theoremstyle{definition}

\title{On the two-systole of real projective spaces}
\author{Lucas Ambrozio and Rafael Montezuma}
\address{L. Ambrozio: Institute for Advanced Study \\ Princeton NJ
08540 USA}
\email{lambrozio@ias.edu}

\address{R. Montezuma: Mathematics Department, Princeton University \\ Fine Hall, Washington Road \\ Princeton NJ 08544-1000 USA}
\email{rcabral@princeton.edu}

\thanks{MSC classification codes: 53A10, 53C30. Keywords: two-systole, minimal two-spheres, homogeneous three-spheres, integral-geometric formula.}

\begin{document}

\begin{abstract}
  We establish an integral-geometric formula for minimal two-spheres inside homogeneous three-spheres, and use it to provide a characterisation of each homogeneous metric on the three-dimensional real projective space as the unique metric with the largest possible two-systole among metrics with the same volume in its conformal class. 
\end{abstract}

\maketitle

\section{Introduction}

\indent Let $\mathbb{RP}^3$ be the three-dimensional real projective space, and $\mathcal{F}$ denote the non-empty set consisting of all embedded surfaces in $\mathbb{RP}^3$ that are diffeomorphic to the two-dimensional projective plane $\mathbb{RP}^2$. Given a Riemannian metric $g$ on $\mathbb{RP}^3$, we define
\begin{equation*}
	\mathcal{A}(\mathbb{RP}^{3},g) = \inf_{\Sigma \in \mathcal{F}}area(\Sigma,g).
\end{equation*}
\noindent In this paper, the geometric invariant above will be called the \textit{two-systole} of $(\mathbb{RP}^3,g)$. The term has been used to name slightly different invariants in the literature, depending on the choice of the set $\mathcal{F}$  (\textit{cf}. \cite{Gro}, Sections 1 and 4.A.7, and \cite{Ber}, Section 5). The first systematic study of such invariants was done by Berger in \cite{Ber}, where he sought generalisations of Pu's inequality \cite{Pu} for the (one)-systole of real projective planes, \textit{i.e.} the smallest length of a non-trivial loop in $(\mathbb{RP}^2,g)$. Berger computed that the two-systole of the standard round metric $g_1$ on $\mathbb{RP}^3$, with constant sectional curvature one, is equal to $2\pi$ (see \cite{Ber}, Th\'eor\`eme 7.1). This number is precisely the area of the totally geodesic projective planes in $(\mathbb{RP}^3,g_1)$. \\
\indent In \cite{BraBreEicNev}, Bray, Brendle, Eichmair and Neves studied how the two-systole behaves under the Ricci flow, proving along the way a sharp upper bound for $\mathcal{A}(\mathbb{RP}^3,g)$ in terms of the minimum value of the scalar curvature of $(\mathbb{RP}^3,g)$ (see \cite{BraBreEicNev}, Theorems 1.1 and 1.2). An important part of their analysis was to show that the infimum defining $\mathcal{A}(\mathbb{RP}^{3},g)$ is actually attained by an embedded area-minimising projective plane in $(\mathbb{RP}^{3},g)$ (see \cite{BraBreEicNev}, Proposition 2.3). \\
\indent In this paper, we investigate how large can be the \textit{normalised two-systole},
\begin{equation*}
	\frac{\mathcal{A}(\mathbb{RP}^3,g)}{vol(\mathbb{RP}^3,g)^{\frac{2}{3}}},
\end{equation*}
among metrics inside a conformal class defined by a homogeneous metric on $\mathbb{RP}^3$, \textit{i.e.} a Riemannian metric whose isometry group acts transitively. The result we obtain is the following:

\begin{thm} \label{thm-main}
	\textit{Let $\overline{g}$ be a homogeneous Riemannian metric on $\mathbb{RP}^3$.  If $g$ is a Riemannian metric on $\mathbb{RP}^3$ that is conformal to $\overline{g}$, then
	\begin{equation*}
		\frac{\mathcal{A}(\mathbb{RP}^3,g)}{vol(\mathbb{RP}^3,g)^{\frac{2}{3}}} \leq \frac{\mathcal{A}(\mathbb{RP}^3,\overline{g})}{vol(\mathbb{RP}^3,\overline{g})^{\frac{2}{3}}}.
	\end{equation*}
	Moreover, equality holds if and only if $g$ is a constant multiple of $\overline{g}$.}
\end{thm}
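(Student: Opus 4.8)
The plan is to combine the integral-geometric formula with Hölder's inequality, taking advantage of the fact that the normalised two-systole is invariant under constant rescalings of the metric (under $g\mapsto\lambda^2 g$ areas scale by $\lambda^2$ and volumes by $\lambda^3$, so numerator and denominator scale by the same power). Write $g=e^{2\phi}\,\overline g$ for a smooth function $\phi$ on $\mathbb{RP}^3$. Passing to the double cover $S^3$, the homogeneous metric $\overline g$ lifts to a homogeneous metric, and every embedded $\mathbb{RP}^2\subset\mathbb{RP}^3$ lifts to a centrally symmetric embedded two-sphere of twice the area; thus the integral-geometric formula for minimal two-spheres in homogeneous three-spheres descends to a \emph{weighted averaging formula} over a measured family $(\P,\mu)$ of minimal projective planes in $(\mathbb{RP}^3,\overline g)$: for every continuous $f$,
\[
\int_{\P}\Big(\int_{\Sigma}f\,dA_{\overline g}\Big)\,d\mu(\Sigma)=c\int_{\mathbb{RP}^3}f\,dV_{\overline g},
\]
for a fixed positive constant $c$.

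First I would extract the two basic consequences of this formula. Taking $f\equiv 1$ and using that the isometry group acts transitively on $\P$, so that all its members share a common area $A_0=area(\Sigma,\overline g)$, gives $A_0\,\mu(\P)=c\,vol(\mathbb{RP}^3,\overline g)$; since these minimal projective planes are area-minimising (so that $A_0=\mathcal{A}(\mathbb{RP}^3,\overline g)$), this already pins down $c/\mu(\P)=\mathcal{A}(\mathbb{RP}^3,\overline g)/vol(\mathbb{RP}^3,\overline g)$. Next, for the conformal metric $g$ I would test the formula against $f=e^{2\phi}$. Since $dA_g=e^{2\phi}dA_{\overline g}$ on every surface, the left-hand side becomes $\int_\P area(\Sigma,g)\,d\mu$; and because each $\Sigma\in\P$ belongs to $\mathcal{F}$ we have $\mathcal{A}(\mathbb{RP}^3,g)\le area(\Sigma,g)$, whence
\[
\mathcal{A}(\mathbb{RP}^3,g)\,\mu(\P)\le\int_{\P}area(\Sigma,g)\,d\mu=c\int_{\mathbb{RP}^3}e^{2\phi}\,dV_{\overline g}.
\]

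The final step is to convert the weighted integral back into the volume of $g$, for which I would apply Hölder's inequality with exponents $3/2$ and $3$, using $dV_g=e^{3\phi}dV_{\overline g}$:
\[
\int_{\mathbb{RP}^3}e^{2\phi}\,dV_{\overline g}\le\Big(\int_{\mathbb{RP}^3}e^{3\phi}\,dV_{\overline g}\Big)^{2/3}\Big(\int_{\mathbb{RP}^3}dV_{\overline g}\Big)^{1/3}=vol(\mathbb{RP}^3,g)^{2/3}\,vol(\mathbb{RP}^3,\overline g)^{1/3}.
\]
Combining the last two displays with the identity $c/\mu(\P)=\mathcal{A}(\mathbb{RP}^3,\overline g)/vol(\mathbb{RP}^3,\overline g)$ yields exactly the asserted inequality after dividing by $vol(\mathbb{RP}^3,g)^{2/3}$. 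For the rigidity statement, equality throughout forces equality in Hölder, which holds precisely when $e^{3\phi}$ is proportional to the constant function, i.e. $\phi$ is constant and $g$ is a constant multiple of $\overline g$; conversely any constant multiple gives equality by the scale-invariance noted at the outset.

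The step I expect to be the main obstacle is not the Hölder estimate but the identification $A_0=\mathcal{A}(\mathbb{RP}^3,\overline g)$, i.e. the fact that the minimal projective planes produced by the integral-geometric formula are genuinely area-minimising among \emph{all} surfaces in $\mathcal{F}$, so that their common area equals the two-systole. This area-minimising property is the deep geometric input; it is where the integral-geometric formula does its essential work, in combination with the existence result of Bray--Brendle--Eichmair--Neves guaranteeing that $\mathcal{A}(\mathbb{RP}^3,\overline g)$ is attained by an embedded area-minimising projective plane. Care is also needed to verify that the descent from $S^3$ to $\mathbb{RP}^3$ preserves both the measured family and the weighted formula, and that the constant $c$ and the total mass $\mu(\P)$ are finite and positive, so that the divisions above are legitimate.
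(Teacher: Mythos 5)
Your proposal is correct and takes essentially the same route as the paper: the integral-geometric formula, H\"older's inequality with exponents $3/2$ and $3$ applied to the conformal factor, and the identification of the common area of the minimal projective planes with $\mathcal{A}(\mathbb{RP}^3,\overline{g})$ via the Bray--Brendle--Eichmair--Neves area-minimiser combined with the Meeks--Mira--P\'erez--Ros uniqueness (this is exactly the content of the paper's Remark \ref{rmk-value}). The only difference is organisational: the paper runs the averaging argument upstairs on $S^3$ (Lemma \ref{lemma}, stated for the family $\mathcal{G}^{+}$ of oriented minimal spheres) and descends to $\mathbb{RP}^3$ only in the final step, whereas you push the formula down to a measured family of minimal projective planes first --- the two formulations are equivalent.
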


\indent Our proof of Theorem \ref{thm-main} is based on the classification of immersed minimal two-spheres in homogeneous three-spheres $(S^3,g)$, which has been obtained by Meeks, Mira, P\'erez and Ros \cite{MeeMirPerRos}. In a few words, up to ambient isometries, there exists a unique immersed minimal sphere, which is actually embedded and invariant under the antipodal map (see Theorem \ref{thm-minimal-spheres} for a more detailed statement). Denoting by $\mathcal{G}^{+}$ the set of oriented minimal two-spheres in a homogeneous $(S^3,g)$, we verify that $\mathcal{G}^{+}$ can be identified with $S^3$ itself, and that the following integral-geometric formula holds:
\begin{equation} \label{eq-formula-intgeo}
	\int_{\mathcal{G}^{+}} \left(\dashint_{\Sigma} f dA_{g} \right) d\mathcal{G}^{+}_{g} = \int_{S^3} f dV_{g} \quad \text{for all} \quad f\in C^{0}(S^3).
\end{equation}
Formula \eqref{eq-formula-intgeo} is well-known in the case of the round three-sphere, where minimal two-spheres are the totally geodesic equators (\textit{cf}. Santal\'o \cite{San}). From this point, a proof of Theorem \ref{thm-main} can be given following essentially the same argument, based on the Uniformisation Theorem, used by Pu and Loewner to establish their theorems about systoles of projective planes and two-tori, respectively (see for example \cite{Gro}, Section 1.B). \\
\indent We remark that the relevance of integral-geometric formulae similar to \eqref{eq-formula-intgeo} in this sort of maximisation problem for one-systoles was already recognised, notably by Gromov and Bavard \cite{Bav}. \\

\indent Restricting our attention to the conformal class of the round metrics, we can thus state the following
\begin{cor}
	If $g$ is a Riemannian metric on $\mathbb{RP}^3$ that is conformal to the round metric $g_1$, then
	\begin{equation*}
		\mathcal{A}(\mathbb{RP}^3,g) \leq \frac{2}{{\sqrt[\leftroot{-1}\uproot{2}\scriptstyle 3]\pi}}vol(\mathbb{RP}^3,g)^{\frac{2}{3}},
	\end{equation*}
	and equality holds if and only if $g$ is a constant multiple of $g_1$.
\end{cor}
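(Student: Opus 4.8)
The plan is to obtain the Corollary as an immediate specialisation of Theorem \ref{thm-main} to the case $\overline{g}=g_{1}$. The round metric $g_{1}$ of constant sectional curvature one is, in particular, a homogeneous metric on $\mathbb{RP}^{3}$: it is the most symmetric metric on the space, and its isometry group (induced from $O(4)$ acting on $S^{3}$ and descending to the antipodal quotient) acts transitively. Hence the hypotheses of Theorem \ref{thm-main} are satisfied with $\overline{g}=g_{1}$, and the theorem gives, for every metric $g$ conformal to $g_{1}$,
\[
\frac{\mathcal{A}(\mathbb{RP}^3,g)}{vol(\mathbb{RP}^3,g)^{\frac{2}{3}}} \leq \frac{\mathcal{A}(\mathbb{RP}^3,g_{1})}{vol(\mathbb{RP}^3,g_{1})^{\frac{2}{3}}},
\]
with equality precisely when $g$ is a constant multiple of $g_{1}$.

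It then remains only to evaluate the right-hand side explicitly. For this I would invoke the two values already recalled in the Introduction: Berger's computation $\mathcal{A}(\mathbb{RP}^{3},g_{1})=2\pi$, which is the area of a totally geodesic $\mathbb{RP}^{2}$ in $(\mathbb{RP}^{3},g_{1})$, together with $vol(\mathbb{RP}^{3},g_{1})=\pi^{2}$, the latter being half of the volume $2\pi^{2}$ of the unit round three-sphere $(S^{3},g_{1})$, of which $\mathbb{RP}^{3}$ is the quotient by the antipodal map. A short arithmetic simplification then yields
\[
\frac{\mathcal{A}(\mathbb{RP}^3,g_{1})}{vol(\mathbb{RP}^3,g_{1})^{\frac{2}{3}}} = \frac{2\pi}{(\pi^{2})^{\frac{2}{3}}} = 2\pi^{-\frac{1}{3}} = \frac{2}{\sqrt[\leftroot{-1}\uproot{2}\scriptstyle 3]\pi}.
\]

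Substituting this constant into the displayed inequality and multiplying both sides by $vol(\mathbb{RP}^{3},g)^{\frac{2}{3}}$ produces exactly the asserted bound, while the equality statement transfers verbatim from Theorem \ref{thm-main}. As every step is a direct substitution, there is no genuine obstacle here; the only points deserving care are the elementary manipulation of the fractional exponents in the normalising constant and the factor of $\tfrac{1}{2}$ incurred when passing from the round three-sphere to its projective quotient.
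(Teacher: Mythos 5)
Your proposal is correct and is exactly the argument the paper intends: the Corollary is stated as an immediate specialisation of Theorem \ref{thm-main} to $\overline{g}=g_{1}$, using Berger's value $\mathcal{A}(\mathbb{RP}^{3},g_{1})=2\pi$ and $vol(\mathbb{RP}^{3},g_{1})=\pi^{2}$ to evaluate the constant $2\pi/(\pi^{2})^{2/3}=2/\sqrt[3]{\pi}$. Your arithmetic and the transfer of the equality case are both accurate, so there is nothing to add.
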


\indent More generally, it seems to be an arduous task to calculate the actual value of the two-systole of all homogeneous metrics, which belong to a two-parameter family up to scaling (see Section \ref{Sec-2}), except perhaps in the case of the family of Berger metrics $g_{\rho}$, $\rho>0$, where explicit formulae can be deduced and a numeric computation is feasible. In Section \ref{Sec-5}, we show that the normalised two-systole of $(\mathbb{RP}^3,g_\rho)$ attains a local strict \textit{minimum} at the round metric ($\rho=1$), and diverges to infinity as the parameter goes to either $0$ or $+\infty$. In a way, one can speak very concretely about systolic freedom (\textit{cf.} \cite{CroKat}, Section 4): the normalised two-systole considered here is unbounded on the space of Riemannian metrics on $\mathbb{RP}^3$, even among homogeneous metrics with positive Ricci curvature. \\

\indent In a companion paper \cite{AmbMon}, we study the \textit{Simon-Smith width} \cite{SimSmi} of three-spheres $(S^3,g)$. When a metric on $S^3$ is the pull-back of a metric $g$ on $\mathbb{RP}^3$ by the canonical projection $\pi : S^3 \rightarrow \mathbb{RP}^3$, and satisfies extra geometric assumptions, the width of $(S^3,\pi^{*}g)$ provides a lower bound to twice the value of the two-systole of $(\mathbb{RP}^3,g)$; for instance, this assertion holds for metrics admitting no stable minimal two-spheres. In \cite{AmbMon}, we interpret the integral-geometric formula \eqref{eq-formula-intgeo} as an evidence that the homogeneous metrics on the three-sphere should be local maxima of the normalised widths in their conformal classes as well. In fact, it was this expectation that led us to investigate the topics discussed here.

\section{Minimal two-spheres in homogeneous three-spheres} \label{Sec-2}

\indent Let $S^3$ denote the unit sphere in $\mathbb{R}^4\simeq \mathbb{C}^2$, centred at the origin,
\begin{equation*}
	S^3 = \{(z,w) \in \mathbb{C}^2,\, |z|^2+|w|^2 =1\}.
\end{equation*}
\indent The three-sphere $S^3$ can be identified with the Lie group $SU(2)$ of the special unitary transformations of $\mathbb{C}^2$, which are represented by the two-by-two complex matrices of the form
\begin{equation*}
	\left[ \begin{matrix}
z & -\overline{w}  \\
w & \overline{z} \end{matrix} \right] ,\quad \text{where} \quad |z|^2 + |w|^2=1.
\end{equation*}
Under this identification, the group operation is given by
\begin{equation*}
	(z,w)\cdot(u,v) = (zu-\overline{w}v,wu+\overline{z}v).
\end{equation*} 
\indent The left (respect. right) multiplication by an element $(z,w)\in S^3$ will be denoted by $\mathcal{L}_{(z,w)}$ (respect. $\mathcal{R}_{(z,w)}$). Notice that $\mathcal{L}_{(1,0)} : S^3 \rightarrow S^3$ is the identity map, whereas $\mathcal{L}_{(-1,0)} : S^3 \rightarrow S^3$ is the antipodal map. \\
\indent The antipodal map commutes with all left translations. We can identify the quotient of $S^3$ by the antipodal map, \textit{i.e.} the three-dimensional real projective space $\mathbb{RP}^3$, with the Lie group $SO(3)$ of the special orthogonal transformations of $\mathbb{R}^3$. \\ 

\indent For every Riemannian metric $g$ on $S^3$ that is invariant under left translations there exists an orthonormal basis $\{E_1, E_2, E_3\}$ of left-invariant vector fields, and real numbers $c_1$, $c_2$ and $c_3$, such that
\begin{equation*}
	[E_2,E_3] = c_1E_1, \quad [E_3,E_1] = c_2E_2, \quad \text{and} \quad [E_1,E_2] = c_3E_3.
\end{equation*} 
See \cite{Mil}, Section 4, for more details. The canonical metric on $S^3$ corresponds to the parameters $c_1=c_2=c_3=2$. The Berger metrics $g_{\rho}$, where $\rho\neq 1$ is a positive real number, corresponds to the parameters $c_1=2\sqrt{\rho}$ and $c_2=c_3=2/\sqrt{\rho}$. Up to a choice of orientation of $S^3$, the constants $c_i$ can be taken to be all positive. \\
\indent The isometry group of a left-invariant metric in $S^3$ will contain transformations other than left translations; in particular, it can have dimension three, four (Berger metrics) or six (round metric). \\
\indent Any compact simply connected (locally) homogeneous Riemannian three-manifold is isometric to $S^3$ endowed with some left-invariant metric (see, for example, Theorem 2.4 in \cite{MeePer}). Since the antipodal map is a left translation, the pull-back by the canonical projection $\pi : S^3 \rightarrow \mathbb{RP}^3$ establishes a bijective correspondence between homogeneous metrics on $\mathbb{RP}^3$ and homogeneous metrics in $S^3$. We will therefore use the terms ``homogeneous" and ``left-invariant" in this paper interchangeably.  \\\\

\indent The geometry of immersed two-spheres with constant mean curvature in a homogeneous three-sphere has been extensively studied by Meeks, Mira, P\'erez and Ros \cite{MeeMirPerRos}. The next proposition summarises those properties of minimal two-spheres that we will need to know for the applications we have in mind:

\begin{thm}(cf. \cite{MeeMirPerRos}, Theorems 1.3 and 7.1) \label{thm-minimal-spheres}
	\\ \indent Let $g$ be a left-invariant metric on $S^3$. 
	\begin{itemize}
		\item[$i)$] There exists an embedded index one minimal sphere $\Sigma_0$ in $(S^3,g)$.
		\item[$ii)$] Every immersed minimal sphere in $(S^3,g)$ is a left translation of $\Sigma_0$. In particular, every immersed minimal sphere in $(S^3,g)$ is an embedded index one minimal sphere isometric to $\Sigma_0$.
		\item[$iii)$] The antipodal map leaves every minimal sphere in $(S^3,g)$ invariant.
		\item[$iv)$] If a left translation leaves a minimal sphere invariant, then it is either the identity map or the antipodal map.
	\end{itemize}		
\end{thm}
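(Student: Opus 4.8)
The plan is to treat the existence and uniqueness statements $i)$ and $ii)$ as the analytic core supplied by the classification of Meeks--Mira--P\'erez--Ros, and to derive the symmetry statements $iii)$ and $iv)$ from them by soft topological arguments. For $i)$ I would produce $\Sigma_0$ by one-parameter min--max over sweepouts of $(S^3,g)$ by two-spheres: Simon--Smith theory yields an embedded minimal surface of genus zero whose Morse index is bounded by the number of parameters, hence at most one, the index being exactly one once stable minimal spheres are excluded. For $ii)$, the heart of the matter, one uses the holomorphic quadratic (Abresch--Rosenberg-type) differential carried by any immersed constant-mean-curvature sphere in a homogeneous three-manifold: being holomorphic it must vanish on a genus-zero surface, which rigidifies the sphere, and together with a priori curvature estimates and a degree argument in the mean-curvature parameter this yields that the minimal spheres form a single orbit of the left-translation group. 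I would simply invoke \cite{MeeMirPerRos} for these two facts.

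For $iii)$ I would first exhibit one minimal sphere invariant under the antipodal map $\mathcal{L}_{(-1,0)}$ and then propagate the symmetry. Since $\mathcal{L}_{(-1,0)}$ is a left translation it is an isometry of $(S^3,g)$, and the left-invariant metric $g$ descends to a metric $\overline{g}$ on $\mathbb{RP}^3=S^3/\mathcal{L}_{(-1,0)}$. By \cite{BraBreEicNev}, Proposition 2.3, there is an embedded area-minimising projective plane $P$ in $(\mathbb{RP}^3,\overline{g})$; its inclusion induces a nontrivial, hence surjective, map on $\pi_1$, so the preimage $\pi^{-1}(P)$ under the double cover $\pi:S^3\to\mathbb{RP}^3$ is the connected double cover $S^2\to\mathbb{RP}^2$ and is an embedded minimal two-sphere, invariant under $\mathcal{L}_{(-1,0)}$ by construction. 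By $ii)$ every minimal sphere is a left translation of this one, so I may take $\Sigma_0$ itself to be $\mathcal{L}_{(-1,0)}$-invariant. Finally, because the antipodal map commutes with every left translation, $\mathcal{L}_{(-1,0)}(\mathcal{L}_{(z,w)}\Sigma_0)=\mathcal{L}_{(z,w)}(\mathcal{L}_{(-1,0)}\Sigma_0)=\mathcal{L}_{(z,w)}\Sigma_0$, so every minimal sphere is invariant under the antipodal map.

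For $iv)$ I would fix a minimal sphere $\Sigma$ and consider $\Gamma=\{(z,w)\in S^3:\mathcal{L}_{(z,w)}\Sigma=\Sigma\}$, a closed subgroup of $SU(2)$. Since $\mathcal{L}_{(z,w)}$ has a fixed point only when $(z,w)=(1,0)$, the group $\Gamma$ acts freely on $\Sigma\cong S^2$. A free action of a positive-dimensional compact group contains a free circle action and would force $\chi(\Sigma)=0$, contradicting $\chi(S^2)=2$; hence $\Gamma$ is finite, and the covering $\Sigma\to\Sigma/\Gamma$ gives $2=\chi(S^2)=|\Gamma|\cdot\chi(\Sigma/\Gamma)$, so $|\Gamma|\le 2$. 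As the only element of order two in $SU(2)$ is $(-1,0)$, it follows that $\Gamma\subseteq\{(1,0),(-1,0)\}$, which is exactly statement $iv)$; combined with $iii)$ it gives $\Gamma=\{(1,0),(-1,0)\}$.

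The \emph{main obstacle} is of course $ii)$: the uniqueness of the minimal sphere up to left translation is the deep output of the constant-mean-curvature sphere classification, and I would not attempt to reprove it. Granting $i)$ and $ii)$, the remaining statements are comparatively soft, resting only on the facts that left translations act freely on $S^3$, that the antipodal map is central among them, and that the two-sphere admits no free action by a group of order greater than two.
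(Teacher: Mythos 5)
Your proposal is correct, and for parts $i)$--$iii)$ it essentially mirrors the paper: $i)$ and $ii)$ are deferred to min-max theory and the Meeks--Mira--P\'erez--Ros classification (the paper additionally records \emph{why} stable minimal spheres are excluded --- homogeneity forces any minimal sphere to have nullity three, so zero cannot be the first Jacobi eigenvalue --- a point you leave implicit before citing \cite{MeeMirPerRos}), and $iii)$ is proved exactly as in the paper, by lifting an area-minimising projective plane from the quotient $(\mathbb{RP}^3,\overline{g})$ and propagating the symmetry to all minimal spheres via $ii)$ and the centrality of the antipodal map; you are in fact slightly more careful than the paper in justifying that the preimage of the projective plane is connected. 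The genuine difference is in $iv)$. The paper conjugates the stabilising left translation by the left-invariant Gauss map $G:\Sigma_0\to S^2$ --- a diffeomorphism by the index-one analysis of \cite{MeeMirPerRos}, satisfying $G\circ\mathcal{L}_{(-1,0)}=-G$ --- builds from $\Phi=G\circ\mathcal{L}_{(a,b)}\circ G^{-1}$ a tangent vector field on $S^2$, and applies the hairy ball theorem to produce $q_0$ with $\Phi(q_0)=\pm q_0$, hence a fixed point of $\mathcal{L}_{(a,b)}$ or of $\mathcal{L}_{(-a,-b)}$. You instead observe that the stabiliser $\Gamma$ is a closed subgroup of $SU(2)$ whose non-identity elements act without fixed points on $\Sigma\cong S^2$ (left translations are fixed-point free), so $\Gamma$ cannot contain a circle --- a free circle action would yield a nowhere-vanishing vector field on $S^2$ --- hence $\Gamma$ is finite, and the covering $\Sigma\to\Sigma/\Gamma$ forces $|\Gamma|$ to divide $\chi(S^2)=2$; since $(-1,0)$ is the unique involution in $SU(2)$, this gives $\Gamma\subseteq\{(1,0),(-1,0)\}$. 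This is a valid and arguably more elementary route: it uses only the group structure of $S^3$ and $\chi(S^2)\neq 0$, and in particular does not need the Gauss map to be a diffeomorphism (a nontrivial input tied to the index-one property). What the paper's choice buys is economy of means in context: the Gauss-map machinery is already on the table from the proof of $ii)$, so its argument for $iv)$ comes essentially for free, whereas yours is self-contained and would apply verbatim to any stabiliser of any surface of nonzero Euler characteristic under a free isometric group action.
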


\indent The results of \cite{MeeMirPerRos} are actually much more general and detailed, and the interested reader is encouraged to study their paper. For the sake of convenience, we will briefly sketch some steps of the proof of the above statement here, taking a slightly different path than the one described in the aforementioned work. In particular, based on the recent progress on min-max theory, one can prove $i)$ directly; properties $iii)$ and $iv)$, which are important for us later, will be explained by different arguments.
\begin{proof}
	First, we observe that, due to homogeneity of the metric $g$, any immersed two-sphere $\Sigma$ in $(S^3,g)$ have nullity three (see Section 4 in \cite{MeeMirPerRos}).	In particular, zero is an eigenvalue of the Jacobi operator of $\Sigma$ that cannot be the first. Thus, no immersed minimal sphere in $(S^3,g)$ is stable. It follows that any left-invariant metric on $S^3$ satisfies the assumptions of the min-max Theorem 3.4 of Marques and Neves \cite{MarNev-Duke}. Hence, there exists an embedded index one minimal two-sphere $\Sigma_0$ in $(S^3,g)$, confirming $i)$. \\
\indent The next (and most important) step involves the study of the left-invariant Gauss map of an index one minimal two-sphere. A key point is to show that this map must be a diffeomorphism, and here the index one property is used in a crucial way. Then, a Hopf differential type argument is used to prove that all immersed minimal spheres are actually obtained by a left translation of the index one minimal sphere $\Sigma_0$. Because details are involved, we refer the reader to the proof of items $(1)$ and $(2)$ of Theorem 4.1 in \cite{MeeMirPerRos} (the paper \cite{DanMir} by Daniel and Mira contains an insightful discussion on the ideas at the origin of the argument). \\
\indent Recall that the antipodal map $\mathcal{L}_{(-1,0)}$ commutes with all left translations. Thus, in view of $ii)$, in order to prove $iii)$ it is enough to show that $(S^3,g)$ contains a minimal two-sphere that is invariant under the antipodal map. As the antipodal map is an isometry of $(S^3,g)$, we can pass to the quotient and look for minimal projective planes contained in $(\mathbb{RP}^3,g)$, for the inverse image of any such surface will be a minimal sphere in $(S^3,g)$ with the required property. The existence of such surface can be shown, for example, by using Meeks-Simon-Yau Theorem to find the element of $\mathcal{F}$ with the least possible area, as indicated on Remark 7.2 in \cite{MeeMirPerRos} (a detailed argument is presented in \cite{BraBreEicNev}, Proposition 2.3). \\
\indent Finally, we prove item $iv)$ as follows. Fix an orientation of $\Sigma_0$ by defining a normal unit vector field $N$. Let $G : \Sigma_0 \rightarrow S^2$ denote the left-invariant Gauss map of $\Sigma_0$: it assigns to each point $p\in \Sigma_0$ the unique unit vector $G(p)$ in $(T_{(1,0)}S^3,g)$ such that $D\mathcal{L}_{p}(G(p)) = N(p)\in T_{p}S^3$. As observed above, $G$ is a diffeomorphism. Moreover, it is immediate to check that $G(\mathcal{L}_{(-1,0)}(p))= - G(p)$ for every $p\in \Sigma_{0}$.  \\
\indent If $\mathcal{L}_{(a,b)}(\Sigma_0)=\Sigma_0$, then the map $\Phi = G\circ \mathcal{L}_{(a,b)}\circ G^{-1}$ is a diffeomorphism of $S^2 \subset (T_{(1,0)}S^3,g)$, which we can use to define the vector field
\begin{equation*}
	X : q \in S^{2} \mapsto \Phi(q) - g(\Phi(q),q)q \in T_{(1,0)}S^3.
\end{equation*}
For every $q\in S^2$, the vector $X(q)$ is tangent to $S^2$. Therefore there exists $q_0$ such that $X(q_0)=0$. By Cauchy-Schwartz, it is immediate to conclude that either $\Phi(q_0)=q_0$ or $\Phi(q_0)=-q_0$. In the first case, $\mathcal{L}_{(a,b)}$ has a fixed point, and thus $(a,b)=(1,0)$. In the second case, the composition $\mathcal{L}_{(-a,-b)}=\mathcal{L}_{(-1,0)}\circ \mathcal{L}_{(a,b)}$ has a fixed point, because $(G\circ\mathcal{L}_{(-a,-b)}\circ G^{-1})(q_0)=(G\circ \mathcal{L}_{(-1,0)} \circ G^{-1})(\Phi(q_0))= -\Phi(q_0) = q_0$. It follows that $(-a,-b)=(1,0)$, or equivalently $(a,b)=(-1,0)$, as claimed. 
\end{proof}

\begin{rmk} An immersed minimal two-sphere in the round three-sphere must be an embedded totally geodesic equator, as proven by Almgren \cite{Alm} and Calabi \cite{Cal} using the holomorphic differential technique pioneered by Hopf. Much later, Abresch and Rosenberg \cite{AbrRos} constructed a new holomorphic differential on surfaces in Berger spheres and used it to show that immersed minimal two-spheres are rotationally invariant and unique up to ambient isometries. In \cite{MeeMirPerRos}, Section 7, the (unique) minimal two-sphere in an arbitrary homogeneous three-sphere is constructed explicitly by geodesic reflection of certain Plateau discs along their boundaries, and their geometry is described in details.  
\end{rmk}

\section{The integral-geometric formula} \label{Sec-3}

\indent The result stated in the previous Section allows us to understand the space of all minimal two-spheres in a three-sphere endowed with a homogeneous metric $g$ completely. In fact, let $\mathcal{G}^{+}$ be set of all oriented immersed minimal spheres in $(S^3,g)$. By item $ii)$ and $iv)$ of Theorem \ref{thm-minimal-spheres},  $\mathcal{G}^{+}$ consists of embedded minimal spheres, and $S^3$ acts transitively and effectively on $\mathcal{G}^{+}$ by left translations . Thus, $\mathcal{G}^{+}$ can be identified with $S^{3}$ itself: choosing any $\Sigma_0$ in $\mathcal{G}^{+}$, the map 
\begin{equation*}
	(a,b) \in S^3 \mapsto \mathcal{L}_{(a,b)}(\Sigma_0) \in \mathcal{G}^{+}
\end{equation*}
is a bijection. We use this map to endow $\mathcal{G}^{+}$ with the Riemannian metric $g$ and all derived structures (metric, topology, volume element). Notice in particular that the natural topology of $\mathcal{G}^{+}$ (smooth graphical convergence) coincides with the topology induced by the above identification.\\
\indent In the next theorem, we prove the integral-geometric formula \eqref{eq-formula-intgeo}.

\begin{thm} \label{thm-integral-formula}
	Let $g$ be a homogeneous Riemannian metric on $S^3$, and $\mathcal{G}^{+}$ denote the set of all oriented minimal two-spheres in $(S^3,g)$. For every continuous function $f$ on $S^3$, the following formula holds:
	\begin{equation} \label{eq-formula}
		\int_{\mathcal{G}^{+}} \left(\dashint_{\Sigma} f dA_{g} \right) d\mathcal{G}^{+}_{g} = \int_{S^3} f dV_{g}.
	\end{equation}		
\end{thm}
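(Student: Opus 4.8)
The plan is to pull the left-hand side of \eqref{eq-formula} back to $S^3$ through the identification $(a,b)\in S^3 \mapsto \mathcal{L}_{(a,b)}(\Sigma_0) \in \mathcal{G}^{+}$ and to reduce everything to the invariance properties of the Riemannian volume of $(S^3,g)$. First I would record that, by item $ii)$ of Theorem \ref{thm-minimal-spheres}, every $\Sigma \in \mathcal{G}^{+}$ is a left translate of a fixed $\Sigma_0$; since left translations are isometries of the left-invariant metric $g$, all minimal spheres are mutually isometric and in particular share a common area $A = area(\Sigma_0,g)>0$. Thus the averaging operator $\dashint_{\Sigma} \cdot\, dA_g = A^{-1}\int_{\Sigma} \cdot\, dA_g$ carries the same normalising constant for every $\Sigma$, and, recalling that the volume element on $\mathcal{G}^{+}$ is $dV_g$ under the above identification, the left-hand side of \eqref{eq-formula} becomes
\[
	\frac{1}{A}\int_{S^3} \left( \int_{\mathcal{L}_{(a,b)}(\Sigma_0)} f \, dA_g \right) dV_g(a,b).
\]

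Next I would use that each $\mathcal{L}_{(a,b)}$ is an isometry to transport the surface integral onto the fixed sphere $\Sigma_0$: the change of variables $y = \mathcal{L}_{(a,b)}(x)$ gives $\int_{\mathcal{L}_{(a,b)}(\Sigma_0)} f\, dA_g = \int_{\Sigma_0} f(\mathcal{L}_{(a,b)}(x))\, dA_g(x)$. Applying Fubini's theorem to exchange the order of integration, the expression takes the form
\[
	\frac{1}{A}\int_{\Sigma_0} \left( \int_{S^3} f\big((a,b)\cdot x\big)\, dV_g(a,b) \right) dA_g(x).
\]
The heart of the matter is the inner integral. For each fixed $x\in \Sigma_0$ the map $(a,b)\mapsto (a,b)\cdot x = \mathcal{R}_x(a,b)$ is the right translation by $x$, and I would claim that $\mathcal{R}_x$ preserves the volume element $dV_g$, so that $\int_{S^3} f((a,b)\cdot x)\, dV_g(a,b) = \int_{S^3} f\, dV_g$ \emph{independently of $x$}.

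This invariance is the step I expect to be the main (and essentially only) subtle point, since the metric $g$ is merely left-invariant and, for the Berger metrics for instance, right translations are \emph{not} isometries. The resolution is that $dV_g$ is nevertheless bi-invariant: being the volume element of a left-invariant metric it is preserved by the left translations, hence is a left-invariant top-degree form on $S^3\cong SU(2)$ and a constant multiple of a left Haar form; and $SU(2)$ is compact, hence unimodular, so its modular homomorphism $\Delta : SU(2)\to \R_{>0}$ has image a compact subgroup of $\R_{>0}$ and is therefore trivial. Consequently $(\mathcal{R}_x)^{*} dV_g = dV_g$ for every $x$, which justifies the claim. Granting this, the inner integral equals $\int_{S^3} f\, dV_g$ for every $x\in \Sigma_0$, and the remaining integration over $\Sigma_0$ merely contributes the factor $area(\Sigma_0,g)=A$, cancelling the prefactor $A^{-1}$. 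This leaves exactly $\int_{S^3} f\, dV_g$ and establishes \eqref{eq-formula}.
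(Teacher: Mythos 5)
Your proposal is correct and follows essentially the same argument as the paper: identify $\mathcal{G}^{+}$ with $S^3$ via left translations of a fixed $\Sigma_0$, change variables using that $\mathcal{L}_{(a,b)}$ is an isometry, apply Fubini, rewrite $f(\mathcal{L}_{(a,b)}(x))=f(\mathcal{R}_{x}(a,b))$, and invoke unimodularity of the compact group $S^3\cong SU(2)$ to conclude that $dV_g$ is right-invariant. The only difference is cosmetic: you spell out the unimodularity argument (trivial modular homomorphism) slightly more explicitly than the paper, which simply cites compactness.
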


\begin{proof}
	 Let $f$ be a continuous function on $S^3$, and fix $\Sigma_0$ in $\mathcal{G}^{+}$. Let $(a,b)$ be the unique point in $S^{3}$ such that $\mathcal{L}_{(a,b)}(\Sigma_0)=\Sigma$. Since $\mathcal{L}_{(a,b)}$ is an orientation-preserving isometry, we can compute the integral of $f$ over $\Sigma$ by
	 \begin{equation*}
	 	\int_{\Sigma} f dA_g = \int_{\Sigma_0} f(\mathcal{L}_{(a,b)}(p,q))dA_{g}(p,q).
	 \end{equation*}
In the above formula, $(p,q)$ denotes the integration variable, which is an arbitrary point of $\Sigma_0$. \\
\indent Clearly, $\Sigma$ and $\Sigma_0$ have the same area in $(S^3,g)$. Given the identification between $\mathcal{G}^{+}$ and $S^3$, we can now use Fubini's Theorem to compute
\begin{align*}
	\int_{\mathcal{G}^{+}} \left(\dashint_{\Sigma} f dA_{g} \right) d\mathcal{G}^{+}_{g} & = \int_{S^3} \left(\dashint_{\Sigma_0} f(\mathcal{L}_{(a,b)}(p,q))dA_{g}(p,q) \right) dV_{g}(a,b) \\
	                 & = \dashint_{\Sigma_0} \left( \int_{S^3} f(\mathcal{L}_{(a,b)}(p,q))dV_{g}(a,b) \right) dA_{g}(p,q) \\
	                 & = \dashint_{\Sigma_0} \left( \int_{S^3} f(\mathcal{R}_{(p,q)}(a,b))dV_{g}(a,b) \right)dA_{g}(p,q).
\end{align*}
\indent As any compact Lie group, $S^3$ is unimodular: the volume form $dV_g$ of the left-invariant metric $g$ must be also invariant by right translations. Thus, for every $(p,q)$ in $\Sigma_0$,
\begin{equation*}
	\int_{S^3} f(\mathcal{R}_{(p,q)}(a,b))dV_{g}(a,b) = \int_{S^3} f dV_g.
\end{equation*}
\noindent Therefore
\begin{equation*}
	\int_{\mathcal{G}^{+}} \left(\dashint_{\Sigma} f dA_{g} \right) d\mathcal{G}^{+}_{g} = \dashint_{\Sigma_0} \left( \int_{S^3} f dV_g\right) dA_g(p,q) = \int_{S^3} f dV_g.
\end{equation*}
\end{proof}

\section{The two-systole of homogeneous metrics} \label{Sec-4}

\indent The next Lemma is a direct consequence of formula \eqref{eq-formula}.

\begin{lemm} \label{lemma}
	Let $\overline{g}$ be a homogeneous Riemannian metric on $S^{3}$, $\mathcal{G}^{+}$ denote the set of all oriented minimal spheres in $(S^3,\overline{g})$, and $w(\overline{g})$ be the common value of the area of each element in $\mathcal{G}^+$. If $g$ is a Riemannian metric on $S^{3}$ that is conformal to $\overline{g}$, then 
	\begin{equation*}
		\min_{\Sigma \in \mathcal{G}^{+}}area(\Sigma,g) \leq \frac{w(\overline{g})}{vol(S^{3},\overline{g})^{\frac{2}{3}}} vol(S^{3},g)^{\frac{2}{3}}.		
	\end{equation*}
	Moreover, equality holds if and only if $g$ is a constant multiple of $\overline{g}$.
\end{lemm}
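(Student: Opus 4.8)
The plan is to exploit the integral-geometric formula \eqref{eq-formula} applied to the conformal factor, combined with a Hölder-type inequality and the sharpness condition. Write $g = e^{2\varphi}\overline{g}$ for some smooth function $\varphi$ on $S^3$. The key observation is that on a two-dimensional surface the area scales by $e^{2\varphi}$, whereas the three-dimensional volume scales by $e^{3\varphi}$. So for each $\Sigma \in \mathcal{G}^+$, we have $area(\Sigma,g) = \int_\Sigma e^{2\varphi}\,dA_{\overline{g}}$, and $vol(S^3,g) = \int_{S^3} e^{3\varphi}\,dV_{\overline{g}}$.

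\emph{First}, I would bound the minimum over $\mathcal{G}^+$ by an average: since the minimum of a nonnegative quantity is at most its average against any probability measure, and since $d\mathcal{G}^+_{\overline{g}}$ has total mass $vol(S^3,\overline{g})$, I would write
\begin{equation*}
	\min_{\Sigma \in \mathcal{G}^+} area(\Sigma,g) \leq \frac{1}{vol(S^3,\overline{g})} \int_{\mathcal{G}^+} \left( \int_\Sigma e^{2\varphi}\,dA_{\overline{g}} \right) d\mathcal{G}^+_{\overline{g}}.
\end{equation*}
\emph{Second}, I would insert the factor $w(\overline{g})$, which is the common area of each element of $\mathcal{G}^+$ in the metric $\overline{g}$, so that $\int_\Sigma e^{2\varphi}\,dA_{\overline{g}} = w(\overline{g})\,\dashint_\Sigma e^{2\varphi}\,dA_{\overline{g}}$, and then apply Theorem \ref{thm-integral-formula} with $f = e^{2\varphi}$ to evaluate the double integral exactly as $\int_{S^3} e^{2\varphi}\,dV_{\overline{g}}$. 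This turns the right-hand side into $\frac{w(\overline{g})}{vol(S^3,\overline{g})}\int_{S^3} e^{2\varphi}\,dV_{\overline{g}}$.

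\emph{Third}, the main work is the analytic inequality relating $\int_{S^3} e^{2\varphi}\,dV_{\overline{g}}$ to $\bigl(\int_{S^3} e^{3\varphi}\,dV_{\overline{g}}\bigr)^{2/3}$. This is exactly Hölder's inequality with exponents $\tfrac{3}{2}$ and $3$: writing $e^{2\varphi} = e^{2\varphi}\cdot 1$, we get $\int_{S^3} e^{2\varphi}\,dV_{\overline{g}} \leq \bigl(\int_{S^3} e^{3\varphi}\,dV_{\overline{g}}\bigr)^{2/3}\,vol(S^3,\overline{g})^{1/3}$. Combining this with the previous step yields
\begin{equation*}
	\min_{\Sigma \in \mathcal{G}^+} area(\Sigma,g) \leq \frac{w(\overline{g})}{vol(S^3,\overline{g})^{2/3}}\,vol(S^3,g)^{2/3},
\end{equation*}
after recognising $\int_{S^3} e^{3\varphi}\,dV_{\overline{g}} = vol(S^3,g)$. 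This is precisely the claimed bound.

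\emph{Finally}, for the rigidity statement, I expect the equality analysis to be the most delicate part, since equality must hold simultaneously in two places. Equality in Hölder's inequality forces $e^{3\varphi}$ to be proportional to the constant function $1$, i.e. $\varphi$ constant, which means $g$ is a constant multiple of $\overline{g}$; conversely, if $\varphi$ is constant both inequalities are equalities (the averaging step becomes an equality because the integrand $e^{2\varphi}\,dA_{\overline{g}}$ is then a constant multiple of the area, so every $\Sigma$ realises the minimum). The subtlety to verify carefully is that equality in the \emph{averaging} step (min $\leq$ average) does not by itself force $\varphi$ constant, so the rigidity really comes from the Hölder step; I would therefore argue the ``only if'' direction directly from equality in Hölder and confirm the ``if'' direction by a short direct computation.
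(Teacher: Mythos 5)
Your proposal is correct and follows essentially the same argument as the paper's proof: bound the minimum by the average over $\mathcal{G}^{+}$, evaluate that average via the integral-geometric formula of Theorem \ref{thm-integral-formula} applied to the conformal factor, finish with H\"older's inequality with exponents $\tfrac{3}{2}$ and $3$, and derive rigidity from the equality case of H\"older (your remark that the averaging step alone cannot force rigidity is exactly the right reading of the logic). The only differences are inessential: you write the conformal factor as $e^{2\varphi}$ where the paper writes $g=\phi\overline{g}$, and the paper additionally records that $\mathcal{G}^{+}$ is compact and $\Sigma \mapsto area(\Sigma,g)$ is continuous, so that the minimum in the statement is actually attained.
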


\begin{proof}
	\indent Write $g=\phi \overline{g}$ for some positive smooth function $\phi$ on $S^{3}$. For every $\Sigma$ in $\mathcal{G}^{+}$, we have
	\begin{equation*}
		area(\Sigma,g) = \int_{\Sigma} \phi dA_{\overline{g}} \quad \Rightarrow \quad area(\Sigma,g) = w(\overline{g})\dashint_{\Sigma} \phi dA_{\overline{g}}.
	\end{equation*}		
	Therefore, the integral-geometric formula \eqref{eq-formula} gives
	\begin{multline*}
		\dashint_{\mathcal{G}^{+}}area(\Sigma,g)dV_{\overline{g}} = w(\overline{g})\dashint_{S^{3}}\phi dV_{\overline{g}} \\
\leq w(\overline{g})\left(\dashint_{S^{3}}\phi^{\frac{3}{2}} dV_{\overline{g}}\right)^{\frac{2}{3}} = \frac{w(\overline{g})}{vol(S^{3},\overline{g})^{\frac{2}{3}}}vol(S^3,g)^{\frac{2}{3}}.
	\end{multline*}
	where we used H\"older's inequality and the fact that $dV_g=\phi^{\frac{3}{2}}dV_{\overline{g}}$. Equality holds if and only if $\phi$ is a positive constant. \\
	\indent Since $\mathcal{G}^{+}$ is compact and the map $\Sigma\in \mathcal{G}^{+} \mapsto area(\Sigma,g) \in \mathbb{R}$ is continuous, the theorem follows.	
\end{proof}

\begin{rmk} \label{rmk-value}
	From the proof of item $iii)$ of Theorem \ref{thm-minimal-spheres}, it should be clear that the value of $w(\overline{g})$ in Lemma \ref{lemma} is equal to twice the value of $\mathcal{A}(\mathbb{RP}^3,\widetilde{g})$, where $\widetilde{g}$ is the unique homogeneous metric on $\mathbb{RP}^3$ such that the canonical projection $\pi : (S^3,\overline{g}) \rightarrow (\mathbb{RP}^3,\widetilde{g})$ is a local isometry.
\end{rmk}

\indent We are now ready to prove Theorem \ref{thm-main}:

\begin{proof}
	By Theorem \ref{thm-minimal-spheres}, each minimal sphere in the homogeneous $(S^3,\pi^{*}\overline{g})$ is embedded and invariant under the antipodal map. Thus, every element of $\mathcal{G}^+$ projects down to $\mathbb{RP}^3$ as an element of $\mathcal{F}$. The result is now a direct consequence of the definition of the two-systole, Lemma \ref{lemma} and Remark \ref{rmk-value}.
\end{proof}		

\section{The two-systole of Berger metrics} \label{Sec-5}

\indent In this section, we compute the value of the normalised two-systole of Berger spheres. We follow the nice exposition of Torralbo in \cite{Tor} and \cite{Tor2}. Given $\rho>0$, the Berger metric on $S^3=\{(z,w)\in \mathbb{C}^2;\, |z|^2+|w|^2=1\}$ is defined by
\begin{equation*}
	g_{\rho}(X,Y) = \langle X, Y \rangle + (\rho^2-1)\langle X,\xi\rangle	\langle Y,\xi\rangle \quad \text{for all} \quad X, Y \in \mathcal{X}(S^3),
\end{equation*}
where $\langle-,-\rangle$ denotes the Eulidean metric on $\mathbb{R}^4\simeq \mathbb{C}^2$ and $\xi: (z,w) \in S^3 \mapsto (iz,iw) \in \mathbb{C}^2$ is the vector field generating the Hopf action of $S^1$ on $S^3$. Notice that $g_{\rho}(\xi,\xi)$ is constant and equal to $\rho^2$, and that $g_{\rho}$ coincides with the standard metric in the orthogonal complement of $\xi$. The metric $g_{1}$ is the standard metric on the unit three-sphere $S^3$. \\
\indent The volume of $(S^3,g_{\rho})$ is equal to 
\begin{equation*}
	vol(S^3,g_{\rho}) = \rho vol(S^3,g_1) = 2\pi^2\rho.
\end{equation*}
\indent For all values of $\rho$, the vector field $\xi$ is an eigenvector of the Ricci tensor of $g_{\rho}$ associated to the eigenvalue $2\rho^2$. When $\rho\neq 1$, the other eigenvalue has multiplicity two and is equal to $4-2\rho^2$. In particular, $(S^3,g_{\rho})$ has positive Ricci curvature when $0 < \rho < \sqrt{2}$.  \\
\indent As observed in \cite{Tor2}, Section 3, the horizontal two-sphere
\begin{equation*}
	\Sigma_0 = \{(z,w)\in S^3;\, w=\overline{w}\} 
\end{equation*}
is precisely the unique minimal two-sphere in $(S^3,g_{\rho})$ up to ambient isometries, for all values of $\rho>0$. An explicit formula for its area is given in \cite{Tor}, Proposition 2. We perform the computation differently: using standard polar coordinates $(s,\theta)$ based at the north pole $(0,0,1,0)$ to parametrised $\Sigma_0$, it is straightforward to calculate
	\begin{equation*}
		area(\Sigma_0,g_{\rho}) = 2\pi \int_{0}^{\pi} \sqrt{\sin^{2}(s)+(\rho^2-1)\sin^{4}(s)} ds. 
	\end{equation*}
\indent The two-systole of $(\mathbb{RP}^3,g_{\rho})$ is equal to half the area of $\Sigma_0$ in $(S^3,g_{\rho})$. Thus, up to the constant factor $1/{\sqrt[\leftroot{-1}\uproot{2}\scriptstyle 3]\pi}$, the normalised two-systole of $(\mathbb{RP}^3,g_{\rho})$ is computed by the function
\begin{equation*}
	F : \rho \in (0,+\infty) \mapsto \frac{1}{\rho^{\frac{2}{3}}}\int_{0}^{\pi}\sin(s) \sqrt{ (1-\sin^{2}(s)) + \rho^2\sin^{2}(s)} ds \in \mathbb{R}.
\end{equation*}	
\noindent It is possible to check that
\begin{equation*}
	F'(1)=0, \quad F''(1)>0 \quad \text{and} \quad  \lim_{\rho\rightarrow 0} F(\rho)= \lim_{\rho\rightarrow +\infty} F(\rho) = +\infty
\end{equation*}
rather easily. In  words: among Berger metrics, the normalised two-systole attains a strict local minimum at the round metric $g_{1}$ and diverges to infinity either as the size of the Hopf orbits increase beyond all bounds, or as they collapse to zero. \\

\noindent \textbf{Acknowledgements:} The investigations that led to the writing of this paper initiated while L.A. was a Research Fellow at the University of Warwick, supported by the EPSRC Programme Grant `Singularities of Geometric Partial Differential Equations', reference number EP/K00865X/1. L.A. would also like to thank Fernando Marques for the invitation to visit the University of Princeton in April 2018, during which the first conversations about this project between the authors took place.

\end{document}